\newtheorem{assumption}{Assumption}
\begin{document}

\title*{Coarse spaces using extended generalized eigenproblems for heterogeneous Helmholtz problems}
\titlerunning{Extended harmonic coarse spaces}
\author{
    Emile Parolin\orcidID{0000-0002-6512-7980} and\\
    Frédéric Nataf\orcidID{0000-0003-2813-3481}
}
\institute{Frédéric Nataf \at Sorbonne Université, Université Paris Cité, CNRS, INRIA, Laboratoire
Jacques-Louis Lions, LJLL, EPC ALPINES, F-75005 Paris, France, \email{frederic.nataf@sorbonne-universite.fr}
\and Emile Parolin \at Sorbonne Université, Université Paris Cité, CNRS, INRIA, Laboratoire
Jacques-Louis Lions, LJLL, EPC ALPINES, F-75005 Paris, France \email{emile.parolin@inria.fr}}

\maketitle

\abstract{
	An abstract construction of coarse spaces for non-Hermitian problems and
	non-Hermitian domain decomposition preconditioners based on extended
	generalized eigenproblems was proposed in~\cite{Nataf2025}
	and analyzed on the matrix formulation.
	Building upon this work, we consider instead here the specific case of
	heterogeneous Helmholtz problems, and the derivation and analysis is
	performed at the continuous level.
	Albeit different from its derivation, its use of oversampling and the
	underlying eigenproblems, our approach shares similarities with the methods
	in~\cite{Hu2025,Ma2025b}.
}

\section{Optimized Restricted Additive Schwarz method}

\noindent
\textbf{Model problem.}
Let \(d\in\{1,2,3\}\) and
\(\Omega\subset\mathbb{R}^{d}\) be an open and bounded domain.
We consider the heterogeneous Helmholtz equation for \(\omega>0\)
\begin{equation}\label{eq:model_pb_strong}
    \text{Find}\ u\in H^{1}(\Omega)\ \text{such that:}
    \qquad
    \begin{cases}
        -\nabla\cdot (\mu \nabla u) - \omega^{2} \nu u = f, & \text{in}\ \Omega,\\
        \mu \partial_{\mathbf{n}} u - \imath \omega u = g, & \text{on}\ \partial\Omega,
    \end{cases}
\end{equation}
where \(\mu\) and \(\nu\) are two variable but positive and bounded
coefficients, \(f\in L^{2}(\Omega)\) and \(g\in L^{2}(\partial\Omega)\) model
the source terms, and \(\mathbf{n}\) is the outward unit normal vector to the
boundary \(\partial\Omega\).
The model problem~\eqref{eq:model_pb_strong} takes the following
variational form
\begin{equation}\label{eq:model_pb_weak}
    \text{Find}\ u\in H^{1}(\Omega)\ \text{such that:}
    \qquad
    a_{\Omega}(u,u^{t}) = l_{\Omega}(u^{t}),
    \quad\forall u^{t}\in H^{1}(\Omega),
\end{equation}
where we introduced, for any \(D \subset \Omega\)
and any \(u,u^{t} \in H^{1}(D)\):
\begin{align}
    a_{D}(u,u^{t})
    & := (\mu \nabla u, \nabla u^{t})_{L^{2}(D)^{d}} 
    - \omega^2 (\nu u, u^{t})_{L^{2}(D)}
    - \imath\omega (u, u^{t})_{L^{2}(\partial D\cap\partial\Omega)},\\
    l_{D}(u^{t})
    & := (f, u^{t})_{L^{2}(D)} 
    + (g, u^{t})_{L^{2}(\partial D\cap\partial\Omega)}.
\end{align}
The above model problem~\eqref{eq:model_pb_weak} is well-posed using standard
arguments based on the Fredholm alternative and a unique continuation principle.

\smallskip
\noindent
\textbf{One-level domain decomposition algorithm}.
We introduce a decomposition of \(\Omega\) into \(J\in\mathbb{N}\), \(J>0\), 
overlapping open subdomains \(\Omega_{j}\), \(j=1,\ldots,J\),
associated to a partition-of-unity function
\(\chi_{j} \in W^{1,\infty}(\Omega_{j})\)
such that
\begin{equation}
    \overline{\Omega} = \textstyle\bigcup_{j} \overline{\Omega_{j}},
    \qquad
    \chi_{j} \geq 0,\ 
    \chi_{j}|_{\partial\Omega_{j} \setminus \partial\Omega} = 0,
    \qquad
    \textstyle\sum_{j} E_{j} \chi_{j} u|_{\Omega_{j}} = u, \ \forall u \in H^{1}(\Omega),
\end{equation}
where \(E_{j}\) is an extension-by-continuity operator.
More precisely,
we require that
\(E_{j} : H^{1}(\Omega_{j}) \to H^{1}({\Omega})\) 
continuously,
\(E_{j}u_{j}|_{\Omega_{j}} = u_{j}\)
and
\(\operatorname{supp}(E_{j}u_{j}) \subset \widetilde{\Omega}_{j}\)
for any
\(u_{j} \in H^{1}(\Omega_{j})\)
and for some extended open subdomain
\(\widetilde{\Omega}_{j}\)
such that
\({\Omega}_{j} \subsetneq \widetilde{\Omega}_{j} \subset \Omega\).

Starting from an initial guess
\(u^{0}_{\mathrm{I}} \in H^{1}(\Omega)\),
the one-level (the index \(_{\mathrm{I}}\) stands for one-level) Optimized
Restricted Additive Schwarz (ORAS) algorithm defines the sequence
\begin{equation}\label{eq:oras_fixed_point}
    u^{n+1}_{\mathrm{I}} := u^{n}_{\mathrm{I}} + \textstyle\sum_{j} E_{j} \chi_{j} v_{j}^{n+1},
    \qquad n \geq 0,
\end{equation}
where \(v_{j}^{n+1}\) is solution of:
Find \(v_{j}^{n+1} \in H^{1}(\Omega_{j})\) such that:
\begin{equation}\label{eq:vj}
    b_{\Omega_{j}}(v_{j}^{n+1},v_{j}^{t}) =
    l_{\Omega}(E_{j}v_{j}^{t})
    - a_{\Omega}(u^{n}_{\mathrm{I}}, E_{j}v_{j}^{t}),
    \qquad \forall v^{t}_{j}\in H^{1}(\Omega_{j}),
\end{equation}
where we introduced, for any \(D \subset \Omega\)
and any \(v,v^{t} \in H^{1}(D)\):
\begin{align}
    b_{D}(v,v^{t})
    := 
    a_{D}(v,v^{t})
    - \imath\omega (v, v^{t})_{L^{2}(\partial D\setminus\partial\Omega)}.
\end{align}
By similar arguments as for the model problem, the local
problem~\eqref{eq:vj} is well-posed, and we remark that the right-hand-side
in~\eqref{eq:vj} can be computed locally in \(\widetilde{\Omega}_{j}\) so that
\(v_{j}^{n+1}\) is equivalently solution of
\begin{equation}\label{eq:vj_local}
    b_{\Omega_{j}}(v_{j}^{n+1},v_{j}^{t}) =
    l_{\widetilde{\Omega}_{j}}((E_{j}v_{j}^{t})|_{\widetilde{\Omega}_{j}})
    - a_{\widetilde{\Omega}_{j}}(
        u^{n}_{\mathrm{I}}|_{\widetilde{\Omega}_{j}},
        (E_{j}v_{j}^{t})|_{\widetilde{\Omega}_{j}}
    ),
    \qquad \forall v^{t}_{j}\in H^{1}(\Omega_{j}).
\end{equation}
For general partitions (where at least three subdomains overlap in some part of
the domain \(\Omega\)), the sequence of \(u_{\mathrm{I}}^{n}\) depends a priori on
the choice of \(E_{j}\) (which is not unique), since it affects the
right-hand-side of~\eqref{eq:vj} (in~\eqref{eq:oras_fixed_point}, the presence
of the partition-of-unity \(\chi_{j}\) implies that \(E_{j}\) extends by zero,
which is uniquely defined).

\smallskip
\noindent
\textbf{One-level error analysis.}
For \(n \geq 0\), the one-level error
\(e^{n}_{\mathrm{I}}:= u - u^{n}_{\mathrm{I}}\) satisfies
\begin{equation}
    e^{n+1}_{\mathrm{I}} = u - u^{n+1}_{\mathrm{I}}
    = e^{n}_{\mathrm{I}} - \textstyle\sum_{j} E_{j} \chi_{j} v_{j}^{n+1}
    = \textstyle\sum_{j} E_{j} \chi_{j} (e^{n}_{\mathrm{I}}|_{\Omega_{j}} - v_{j}^{n+1}).
\end{equation}
From~\eqref{eq:vj_local}, we have,
\begin{equation}
    b_{\Omega_{j}}(e^{n}_{\mathrm{I}}|_{\Omega_{j}} - v_{j}^{n+1},v_{j}^{t}) =
    b_{\Omega_{j}}(e^{n}_{\mathrm{I}}|_{\Omega_{j}},v_{j}^{t}) -
    a_{\widetilde{\Omega}_{j}}(
        e^{n}_{\mathrm{I}}|_{\widetilde{\Omega}_{j}},
        (E_{j}v_{j}^{t})|_{\widetilde{\Omega}_{j}}
    ),
    \ \forall v_{j}^{t} \in H^{1}(\Omega_{j}),
\end{equation}
so that
\begin{equation}
    e^{n+1}_{\mathrm{I}}
    = \textstyle\sum_{j} E_{j}\chi_{j}R_{j}e^{n}_{\mathrm{I}}|_{\widetilde{\Omega}_{j}},
\end{equation}
where \(R_{j} : H^{1}(\widetilde{\Omega}_{j}) \to H^{1}(\Omega_{j})\) is
defined for any \(\widetilde{v}_{j} \in H^{1}(\widetilde{\Omega}_{j})\) as
\begin{equation}\label{eq:def_Rj}
    b_{\Omega_{j}}(R_{j} \widetilde{v}_{j}, v_{j}^{t}) =
    b_{\Omega_{j}}(\widetilde{v}_{j}|_{\Omega_{j}}, v_{j}^{t}) -
    a_{\widetilde{\Omega}_{j}}(\widetilde{v}_{j}, (E_{j}v_{j}^{t})|_{\widetilde{\Omega}_{j}}),
    \qquad \forall v_{j}^{t} \in H^{1}(\Omega_{j}).
\end{equation}
The operator \(R_{j}\) is well-defined since the local
problems~\eqref{eq:def_Rj} are well-posed.

We measure the error in the energy norm given 
by the following sesquilinear form:
for any \(D \subset \Omega\)
and any \(u,u^{t} \in H^{1}(D)\), let
\begin{equation}\label{eq:norm}
    c_{D}(u,u^{t})
    := (\mu \nabla u, \nabla u^{t})_{L^{2}(D)^{d}} 
    + \omega^2 (\nu u, u^{t})_{L^{2}(D)},
    \qquad 
    \|u\|_{D}^{2} := c_{D}(u,u).
\end{equation}
The error can be estimated as follows
\begin{align}\label{eq:error_est_lvl1}
    \|e^{n+1}_{\mathrm{I}}\|_{\Omega}^{2}
     \leq k_{0}
    \textstyle\sum_{j} 
    \big\|\chi_{j} R_{j}e^{n}_{\mathrm{I}}|_{\widetilde{\Omega}_{j}}\big\| _{\Omega_{j}}^{2}
     \leq k_{0} \xi
    \textstyle\sum_{j} 
    \big\|e^{n}_{\mathrm{I}}|_{\widetilde{\Omega}_{j}}\big\|_{\widetilde{\Omega}_{j}}^{2}
    \leq k_{0} k_{1} \xi
    \|e^{n}_{\mathrm{I}}\|_{\Omega}^{2},
\end{align}
where we introduced
\begin{align}
    & k_{0} := \!\!\sup_{\substack{(u_{j})_{j}\\0 \neq u_{j} \in H^{1}_{0}(\Omega_{j})}}\!\!
    \frac{
        \|\textstyle\sum_{j} E_{j}u_{j}\|_{\Omega}^{2}
    }{
        \textstyle\sum_{j} \|u_{j}\|_{\Omega_{j}}^{2}
    },
    \qquad
    k_{1} := \!\!\sup_{0 \neq u \in H^{1}(\Omega)}\!\!
    \frac{
        \textstyle\sum_{j} \|u|_{\widetilde{\Omega}_{j}}\|_{\widetilde{\Omega}_{j}}^{2}
    }{
        \|u\|_{\Omega}^{2}
    },\label{eq:k01}\\
    & \xi := \sup_{j} \xi_{j},
    \qquad
    \xi_{j} := \!\!\sup_{0 \neq \widetilde{u}_{j} \in H^{1}(\widetilde{\Omega}_{j})}\!\!
    \frac{
        \|\chi_{j} R_{j}\widetilde{u}_{j}\|_{\Omega_{j}}^{2}
    }
    {
        \|\widetilde{u}_{j}\|_{\widetilde{\Omega}_{j}}^{2}
    }.
\end{align}
The estimate~\eqref{eq:error_est_lvl1} shows that a sufficient condition to
have convergence of the algorithm is that \(k_{0}k_{1}\xi < 1\), which is
unlikely to be satisfied.
The two constants \(k_{0}\) and \(k_{1}\) can be bounded uniformly of the
parameters \(\mu,\nu,\omega\) of the problem and only depend respectively on the
maximum number of neighbors for a subdomain
(see e.g.~\cite[Lem.~7.9]{Dolean2015}) 
and the maximum number of subdomain intersection
multiplicity
(see e.g.~\cite[Lem.~7.13]{Dolean2015}).
For reasonable partitions, both quantities can be bounded uniformly of the
number of subdomains.
On the other hand, \(\xi\) depends on the parameters of the problem.
In the next section, we propose the construction of a coarse space and
associated two-level domain decomposition algorithm that allows to control each
\(\xi_{j}\) independently, hence \(\xi\).

\section{Two-level ORAS method}

\noindent
\textbf{Local generalized eigenproblem}
The definition of \(\xi_{j}\) suggests considering the following local
generalized eigenproblem:
Find
\(
(\widetilde{u}_{j},\lambda_{j}) \in H^{1}(\widetilde{\Omega}_{j}) \times \mathbb{R}
\)
such that:
\begin{align}\label{eq:gevp}
    c_{\Omega_{j}}(\chi_{j} R_{j}\widetilde{u}_{j}, \chi_{j} R_{j}\widetilde{u}_{j}^{t})
    = \lambda_{j}
    c_{\widetilde{\Omega}_{j}}(\widetilde{u}_{j}, \widetilde{u}_{j}^{t}),
    \qquad\forall
    \widetilde{u}_{j}^{t} \in H^{1}(\widetilde{\Omega}_{j}).
\end{align}
This problem is equivalent to the following saddle-point formulation:\\
Find
\(
(\widetilde{u}_{j},v_{j},\sigma_{j},\lambda_{j})
\in H^{1}(\widetilde{\Omega}_{j}) \times H^{1}(\Omega_{j}) \times H^{1}(\Omega_{j}) \times \mathbb{R}
\)
such that:
\begin{align}\label{eq:gevp_saddlepoint}
    \begin{cases}
        c_{\Omega_{j}}(\chi_{j} v_{j}, \chi_{j} v_{j}^{t})
        - b_{\Omega_{j}}(v_{j}^{t}, \sigma_{j})
        = 0,
        & \forall
        v_{j}^{t} \in H^{1}(\Omega_{j}),\\
        - b_{\Omega_{j}}(v_{j}, \sigma_{j}^{t})
        + b_{\Omega_{j}}(\widetilde{u}_{j}|_{\Omega_{j}}, \sigma_{j}^{t})
        - a_{\widetilde{\Omega}_{j}}(\widetilde{u}_{j}, (E_{j}\sigma_{j}^{t})|_{\widetilde{\Omega}_{j}})
        = 0,
        & \forall
        \sigma_{j}^{t} \in H^{1}(\Omega_{j}),\\
        b_{\Omega_{j}}(\widetilde{u}_{j}^{t}|_{\Omega_{j}}, \sigma_{j})
        - a_{\widetilde{\Omega}_{j}}(\widetilde{u}_{j}^{t}, (E_{j}\sigma_{j})|_{\widetilde{\Omega}_{j}})
        = \lambda_{j}
        c_{\widetilde{\Omega}_{j}}(\widetilde{u}_{j}, \widetilde{u}_{j}^{t}),
        & \forall
        \widetilde{u}_{j}^{t} \in H^{1}(\widetilde{\Omega}_{j}).
    \end{cases}
\end{align}

The following result shows that each \(\xi_{j}\) (hence \(\xi\)) is finite and
that the eigenvalues \(\lambda_{j}\) solutions to~\eqref{eq:gevp} 
converge to \(0\).

\begin{proposition}\label{th:Pj_compact}
    The operator
    \(
        K_{j} : \widetilde{u}_{j} \in H^{1}(\widetilde{\Omega}_{j}) \mapsto
        \chi_{j} R_{j} \widetilde{u}_{j} \in H^{1}(\Omega_{j})
    \)
    is compact.
\end{proposition}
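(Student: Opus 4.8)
The plan is to prove that $K_j$ is completely continuous, i.e.\ that it maps every weakly null sequence to a strongly convergent (null) one; since $H^{1}(\widetilde{\Omega}_j)$ is reflexive this is equivalent to compactness, and since $\mu,\nu$ are positive and bounded the energy norm $\|\cdot\|_{\Omega_j}$ is equivalent to the usual $H^{1}(\Omega_j)$-norm, so it is immaterial in which of the two we measure convergence. Fix a sequence $\widetilde u_n\rightharpoonup 0$ in $H^{1}(\widetilde{\Omega}_j)$, set $w_n:=R_j\widetilde u_n$, and denote by $F_{\widetilde v}(v^t):=b_{\Omega_j}(\widetilde v|_{\Omega_j},v^t)-a_{\widetilde{\Omega}_j}(\widetilde v,(E_jv^t)|_{\widetilde{\Omega}_j})$ the bounded functional (linear in $\widetilde v$) appearing in the right-hand side of~\eqref{eq:def_Rj}. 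Well-posedness of~\eqref{eq:def_Rj} makes $R_j$ bounded, so $(w_n)_n$ is bounded in $H^{1}(\Omega_j)$.

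First I would show $w_n\rightharpoonup 0$ in $H^{1}(\Omega_j)$. For any fixed $v^t\in H^{1}(\Omega_j)$, the maps $\widetilde v\mapsto b_{\Omega_j}(\widetilde v|_{\Omega_j},v^t)$ and $\widetilde v\mapsto a_{\widetilde{\Omega}_j}(\widetilde v,(E_jv^t)|_{\widetilde{\Omega}_j})$ are bounded linear functionals on $H^{1}(\widetilde{\Omega}_j)$, hence weakly continuous, so $b_{\Omega_j}(w_n,v^t)=F_{\widetilde u_n}(v^t)\to 0$. Any weak limit point $w$ of the bounded sequence $(w_n)_n$ therefore satisfies $b_{\Omega_j}(w,v^t)=0$ for all $v^t$, whence $w=0$ by well-posedness of~\eqref{eq:def_Rj}; thus $w_n\rightharpoonup 0$. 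By the Rellich--Kondrachov theorem and compactness of the trace maps $H^{1}(\widetilde{\Omega}_j)\to L^{2}(\partial\widetilde{\Omega}_j)$ and $H^{1}(\Omega_j)\to L^{2}(\partial\Omega_j)$ (valid for Lipschitz subdomains), this yields the strong convergences $w_n\to 0$ in $L^{2}(\Omega_j)$ and in $L^{2}(\partial\Omega_j)$, and $\widetilde u_n\to 0$ in $L^{2}(\partial\widetilde{\Omega}_j)$.

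The heart of the argument is to prove that $\int_{\Omega_j}\mu\,\chi_j^{2}|\nabla w_n|^{2}\to 0$. Testing~\eqref{eq:def_Rj} with $v^t=\chi_j^{2}w_n\in H^{1}(\Omega_j)$, expanding $\nabla(\chi_j^{2}w_n)=\chi_j^{2}\nabla w_n+2\chi_j w_n\nabla\chi_j$ and taking real parts (the Robin boundary term of $b_{\Omega_j}$ being purely imaginary) gives
\[
\int_{\Omega_j}\mu\,\chi_j^{2}|\nabla w_n|^{2}
=\operatorname{Re}\,F_{\widetilde u_n}(\chi_j^{2}w_n)
+\omega^{2}\!\int_{\Omega_j}\!\nu\,\chi_j^{2}|w_n|^{2}
-2\operatorname{Re}\!\int_{\Omega_j}\!\mu\,\chi_j\,\overline{w_n}\,\nabla\chi_j\cdot\nabla w_n .
\]
The second term tends to $0$ because $w_n\to 0$ in $L^{2}(\Omega_j)$; the third is bounded by $2\|\mu\|_{\infty}\|\nabla\chi_j\|_{\infty}\,\|\chi_j w_n\|_{L^{2}(\Omega_j)}\,\|\nabla w_n\|_{L^{2}(\Omega_j)^{d}}$, which tends to $0$ since the first factor does and the second is bounded. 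For the first term I would use the property of $E_j$ recalled after~\eqref{eq:vj_local}: since $\chi_j$ vanishes on $\partial\Omega_j\setminus\partial\Omega$, the function $\chi_j^{2}w_n$ has vanishing trace there, so $E_j(\chi_j^{2}w_n)$ is its extension by zero, supported in $\overline{\Omega_j}$. Substituting this into $F_{\widetilde u_n}(\chi_j^{2}w_n)=b_{\Omega_j}(\widetilde u_n|_{\Omega_j},\chi_j^{2}w_n)-a_{\widetilde{\Omega}_j}(\widetilde u_n,E_j(\chi_j^{2}w_n))$, all volume integrals cancel exactly — crucially, $E_j(\chi_j^{2}w_n)$ and its gradient vanish on the oversampling collar $\widetilde{\Omega}_j\setminus\overline{\Omega_j}$, so no term pairs $\nabla\widetilde u_n$ (which converges only weakly) against a surviving factor — leaving only the difference of two boundary integrals of the form $\int_{\Gamma}\widetilde u_n\,\overline{(\cdot)}$ with $\Gamma\subseteq\partial\widetilde{\Omega}_j\cap\partial\Omega$ and the remaining factor ($\chi_j^{2}w_n$, resp.\ $E_j(\chi_j^{2}w_n)$) bounded in $L^{2}(\Gamma)$; since $\widetilde u_n\to 0$ in $L^{2}(\partial\widetilde{\Omega}_j)$ this forces $F_{\widetilde u_n}(\chi_j^{2}w_n)\to 0$. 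Hence $\int_{\Omega_j}\mu\,\chi_j^{2}|\nabla w_n|^{2}\to 0$, and since $\mu$ is bounded below by a positive constant, $\chi_j\nabla w_n\to 0$ in $L^{2}(\Omega_j)^{d}$.

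Combining, $\nabla(\chi_j w_n)=\chi_j\nabla w_n+w_n\nabla\chi_j\to 0$ in $L^{2}(\Omega_j)^{d}$ (using $\nabla\chi_j\in L^{\infty}$ and $w_n\to 0$ in $L^{2}$) and $\chi_j w_n\to 0$ in $L^{2}(\Omega_j)$, so $\|K_j\widetilde u_n\|_{\Omega_j}=\|\chi_j R_j\widetilde u_n\|_{\Omega_j}\to 0$; by linearity the same holds for any weakly convergent $\widetilde u_n$, which proves compactness of $K_j$. I expect the main obstacle to be the bookkeeping in the cancellation step: one must verify carefully that $E_j$ applied to the $\chi_j$-weighted function is genuinely the zero-extension (this is exactly the uniqueness property of $E_j$ in the presence of a partition-of-unity factor) and keep track of which portions of $\partial\Omega_j$ and $\partial\widetilde{\Omega}_j$ carry the residual boundary terms; a minor, standard point is ensuring the subdomains are regular enough for the trace embeddings into $L^{2}$ to be compact.
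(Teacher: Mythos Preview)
Your proof is correct and shares its central mechanism with the paper: both test the defining relation~\eqref{eq:def_Rj} with $v_j^t=\chi_j^{2}R_j\widetilde u_j$ and exploit that $E_j(\chi_j^{2}w)$ is the zero extension. The executions differ in an instructive way. You argue sequentially, first establishing $w_n\rightharpoonup 0$, then showing term by term that $\chi_j\nabla w_n\to 0$ in $L^{2}$, and you treat the right-hand side $F_{\widetilde u_n}(\chi_j^{2}w_n)$ as a residual boundary contribution that vanishes thanks to compact trace embeddings. The paper instead observes that this right-hand side is \emph{identically} zero for every $\widetilde u_j$ (the two boundary integrals you isolate actually cancel exactly, since $\partial\Omega_j\cap\partial\Omega=\partial\widetilde\Omega_j\cap\partial\Omega\cap\overline{\Omega_j}$), which yields directly the Caccioppoli inequality
\[
\|\chi_j R_j\widetilde u_j\|_{\Omega_j}^{2}
=(\mu R_j\widetilde u_j\nabla\chi_j,R_j\widetilde u_j\nabla\chi_j)_{L^{2}(\Omega_j)^{d}}
+2\omega^{2}(\nu R_j\widetilde u_j,\chi_j^{2}R_j\widetilde u_j)_{L^{2}(\Omega_j)}
\le C\|R_j\widetilde u_j\|_{L^{2}(\Omega_j)}^{2},
\]
after which compactness follows from a single application of Rellich--Kondrachov, with no need for the weak-convergence detour or the trace compactness. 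Your route is more robust if cancellation were imperfect; the paper's buys a clean quantitative bound and a shorter argument.
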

\begin{proof}
    The result is based on a Caccioppoli inequality which we establish next
    (see also~\cite[Lem.~2.7]{Ma2023}).
    From~\eqref{eq:def_Rj} we have, 
    for any \(\widetilde{u}_{j} \in H^{1}(\widetilde{\Omega}_{j})\),
    \begin{equation}\label{eq:start_Caccioppoli}
        b_{\Omega_{j}}(R_{j} \widetilde{u}_{j}, \chi_{j}^{2} R_{j} \widetilde{u}_{j}) = 0,
    \end{equation}
    which implies (adding~\eqref{eq:start_Caccioppoli} and its conjugate) that
    \begin{align}
        & (\mu \chi_{j} \nabla R_{j} \widetilde{u}_{j}, \chi_{j} \nabla R_{j} \widetilde{u}_{j})_{L^{2}(\Omega_{j})^{d}} 
        + (\mu R_{j} \widetilde{u}_{j} \nabla \chi_{j}, \chi_{j} \nabla R_{j} \widetilde{u}_{j})_{L^{2}(\Omega_{j})^{d}} \nonumber\\
        &\quad + (\mu \chi_{j} \nabla R_{j} \widetilde{u}_{j}, R_{j} \widetilde{u}_{j} \nabla \chi_{j})_{L^{2}(\Omega_{j})^{d}} 
        - \omega^2 (\nu R_{j} \widetilde{u}_{j}, \chi_{j}^{2} R_{j} \widetilde{u}_{j})_{L^{2}(\Omega_{j})} = 0.
    \end{align}
    On the other hand, we have, for any
    \(
    \widetilde{u}_{j} \in H^{1}(\widetilde{\Omega}_{j})
    \),
    \begin{align}
        & \|\chi_{j} R_{j}\widetilde{u}_{j}\|_{\Omega_{j}}^{2} = 
        c_{\Omega_{j}}(\chi_{j} R_{j}\widetilde{u}_{j}, \chi_{j} R_{j}\widetilde{u}_{j}) = 
         (\mu R_{j} \widetilde{u}_{j} \nabla \chi_{j}, R_{j} \widetilde{u}_{j} \nabla \chi_{j})_{L^{2}(\Omega_{j})^{d}} \nonumber\\
        &\quad + (\mu \chi_{j} \nabla R_{j} \widetilde{u}_{j}, \chi_{j} \nabla R_{j} \widetilde{u}_{j})_{L^{2}(\Omega_{j})^{d}} 
        + (\mu R_{j} \widetilde{u}_{j} \nabla \chi_{j}, \chi_{j} \nabla R_{j} \widetilde{u}_{j})_{L^{2}(\Omega_{j})^{d}} \nonumber\\
        &\quad + (\mu \chi_{j} \nabla R_{j} \widetilde{u}_{j}, R_{j} \widetilde{u}_{j} \nabla \chi_{j})_{L^{2}(\Omega_{j})^{d}} 
        + \omega^2 (\nu \chi_{j} R_{j} \widetilde{u}_{j}, \chi_{j} R_{j} \widetilde{u}_{j})_{L^{2}(\Omega_{j})}\nonumber\\
        & = (\mu R_{j} \widetilde{u}_{j} \nabla \chi_{j}, R_{j} \widetilde{u}_{j} \nabla \chi_{j})_{L^{2}(\Omega_{j})^{d}} 
        + 2\omega^2 (\nu R_{j} \widetilde{u}_{j}, \chi_{j}^{2} R_{j} \widetilde{u}_{j})_{L^{2}(\Omega_{j})}
        \leq C \|R_{j} \widetilde{u}_{j}\|^{2}_{L^{2}(\Omega_{j})},\nonumber
    \end{align}
    for some constant \(C>0\) depending on \(\omega\), upper bounds on the
    coefficients \(\mu,\nu\) and the partition-of-unity \(\chi_{j}\).
    The result then follows by application of the Rellich--Kondrachov theorem.
\end{proof}

\noindent
\textbf{Coarse space.}
For a user-defined parameter \(\tau > 0\),
we introduce the coarse space
\begin{equation}
    U_{\tau} := \textstyle\bigoplus_{j} \operatorname{span}\{
        E_{j} \chi_{j} R_{j} \widetilde{u}_{j}\ :
        \ (\widetilde{u}_{j},\lambda_{j})\ 
        \text{solution of~\eqref{eq:gevp} such that}\ \lambda_{j} > \tau
    \}.
\end{equation}
The coarse space \(U_{\tau}\) is made of local Helmholtz solutions
\(R_{j}\widetilde{u}_{j}\) that are extended-by-zero globally thanks to
\(E_{j}\) after being multiplied by the partition-of-unity function
\(\chi_{j}\).
Indeed, for \(\widetilde{u}_{j}\) solution of~\eqref{eq:gevp},
we have \(R_{j}\widetilde{u}_{j} = v_{j}\) solution
of~\eqref{eq:gevp_saddlepoint} which satisfies
(testing with \(\sigma_{j}^{t} \in H^{1}_{0}(\Omega_{j})\) in~\eqref{eq:gevp_saddlepoint})
\begin{equation}
    (\mu \nabla v_{j}, \nabla v_{j}^{t})_{L^{2}(\Omega_{j})^{d}}
    - \omega^2 (\nu v_{j}, v_{j}^{t})_{L^{2}(\Omega_{j})} = 0,
    \qquad\forall v_{j}^{t} \in H^{1}_{0}(\Omega_{j}).
\end{equation}
Moreover, thanks to the partition-of-unity, the coarse space is conforming,
namely \(U_{\tau} \subset H^{1}(\Omega)\).
Finally, Proposition~\ref{th:Pj_compact} ensures that \(U_{\tau}\)
is finite dimensional.

\smallskip
\noindent
\textbf{Two-level domain decomposition algorithm.}
Starting from an initial guess \(u^{0}_{\mathrm{II}} \in H^{1}(\Omega)\), we define
a sequence of solutions \(u^{n}_{\mathrm{II}} \in H^{1}(\Omega)\)
(the index \(_{\mathrm{II}}\) stands for two-level),
\begin{equation}\label{eq:oras_2lvl_fixedpoint}
    u^{n+1}_{\mathrm{II}} := u^{n}_{\mathrm{II}} + \textstyle\sum_{j} E_{j} \chi_{j} w_{j}^{n+1} + w_{0}^{n+1},
    \qquad n \geq 0,
\end{equation}
where \(w_{j}^{n+1}\) is solution of:
Find \(w_{j}^{n+1} \in H^{1}(\Omega_{j})\) such that:
\begin{equation}\label{eq:wj}
    b_{\Omega_{j}}(w_{j}^{n+1},w_{j}^{t}) =
    l_{\Omega}(E_{j}w_{j}^{t})
    - a_{\Omega}(u^{n}_{\mathrm{II}}, E_{j}w_{j}^{t}),
    \qquad \forall w^{t}_{j}\in H^{1}(\Omega_{j}),
\end{equation}
and \(w_{0}^{n+1}\) is solution of:
Find \(w_{0}^{n+1} \in U_{\tau}\) such that:
\begin{equation}\label{eq:w0}
    a_{\Omega}(w_{0}^{n+1}, w_{0}^{t}) =
    l_{\Omega}(w_{0}^{t})
    - a_{\Omega}(u^{n}_{\mathrm{II}} + \textstyle\sum_{j} E_{j} \chi_{j} w_{j}^{n+1},w_{0}^{t}),
    \quad\forall w_{0}^{t} \in U_{\tau}.
\end{equation}
\begin{assumption}\label{hyp:coarse_pb_wellposed}
    For any anti-linear form \(L\) on \(H^{1}(\Omega)\), consider the two problems
    \begin{alignat}{3}
        & \text{Find } u\in H^{1}(\Omega) \text{ such that: }\quad
        && a_{\Omega}(u,u^t) = L(u^{t}), \ \forall u^t \in H^{1}(\Omega),\\
        & \text{Find } u_{0}\in U_{\tau} \text{ such that: }\quad
        && a_{\Omega}(u_{0},u_{0}^t) = L(u_{0}^{t}), \ \forall u_{0}^t \in U_{\tau}.\label{eq:general_coarse_pb}
    \end{alignat}
    In the following, we assume that the coarse
    problem~\eqref{eq:general_coarse_pb} is well-posed and that there exists
    \(\sigma>0\), independent of \(\tau\) and \(L\),
    such that the following stability bound holds
    \begin{equation}\label{eq:coarse_pb_stability}
        \|u-u_{0}\|_{\Omega} \leq \sigma \|u\|_{\Omega}.
    \end{equation}
\end{assumption}
Assumption~\ref{hyp:coarse_pb_wellposed} implies that the problem~\eqref{eq:w0}
is well-posed.
The validity of Assumption~\ref{hyp:coarse_pb_wellposed} can be argued
for large enough coarse spaces \(U_{\tau}\) (i.e.\ taking the
user-controlled parameter \(\tau\) sufficiently small).
For instance, well-posedness of the coarse problem and a quasi-optimality
estimate implying~\eqref{eq:coarse_pb_stability} can be obtained using the
so-called Aubin--Nitsche duality trick and Schatz argument,
see~\cite[Lemma~5.1]{Galkowski2025}.

\smallskip
\noindent
\textbf{Two-level error analysis.}
For \(n \geq 0\), the two-level error
\(e^{n}_{\mathrm{II}}:= u - u^{n}_{\mathrm{II}}\) satisfies
\begin{equation}\label{eq:error_oras_2lvl}
    e^{n+1}_{\mathrm{II}} = u - u^{n+1}_{\mathrm{II}}
    = u - u^{n}_{\mathrm{II}} - \textstyle\sum_{j} E_{j} \chi_{j} w_{j}^{n+1} - w_{0}^{n+1}.
\end{equation}
From~\eqref{eq:wj}, we have,
\begin{equation}
    b_{\Omega_{j}}(e^{n}_{\mathrm{II}}|_{\Omega_{j}} - w_{j}^{n+1},w_{j}^{t}) =
    b_{\Omega_{j}}(e^{n}_{\mathrm{II}}|_{\Omega_{j}},w_{j}^{t}) -
    a_{\widetilde{\Omega}_{j}}(e^{n}_{\mathrm{II}}|_{\widetilde{\Omega}_{j}}, (E_{j}w_{j}^{t})|_{\widetilde{\Omega}_{j}}),
    \ \forall w_{j}^{t} \in H^{1}(\Omega_{j}),
\end{equation}
and from~\eqref{eq:w0}, we have
\begin{equation}
    a_{\Omega}(w_{0}^{n+1}, w_{0}^{t}) =
    a_{\Omega}(\textstyle\sum_{j} E_{j} \chi_{j} (e^{n}_{\mathrm{II}}|_{\Omega_{j}} - w_{j}^{n+1}),w_{0}^{t}),
    \quad\forall w_{0}^{t} \in U_{\tau},
\end{equation}
so that
\begin{equation}
    e^{n+1}_{\mathrm{II}}
    = (\mathrm{I}-P_{0})\textstyle\sum_{j} E_{j}\chi_{j}R_{j}e^{n}_{\mathrm{II}}|_{\widetilde{\Omega}_{j}},
\end{equation}
where we introduced \(P_{0} : H^{1}(\Omega) \to H^{1}(\Omega)\)
defined as:
for any \(u \in H^{1}(\Omega)\), \(P_{0}u \in U_{\tau}\) such that:
\begin{equation}\label{eq:P0}
    a_{\Omega}(P_{0}u, u_{0}^{t}) = a_{\Omega}(u, u_{0}^{t}),
    \quad\forall u_{0}^{t} \in U_{\tau}.
\end{equation}
Provided Assumption~\ref{hyp:coarse_pb_wellposed} holds,
the operator \(P_{0}\) is well-defined.
Besides, \(P_{0}\) is an oblique projection on the closed space \(U_{\tau}\)
and the operator norm of \(\mathrm{I}-P_{0}\) for \(c_{\Omega}\) is bounded by
the stability constant \(\sigma\) (see~\eqref{eq:coarse_pb_stability}) of the
coarse problem, namely
\begin{equation}\label{eq:ImP0norm}
    \sup_{0 \neq u \in H^{1}(\Omega)}\!\!
    \frac{
        \|(\mathrm{I}-P_{0})u\|_{\Omega}
    }
    {
        \|u\|_{\Omega}
    } \leq \sigma.
\end{equation}

We can now state our main result, which is the counterpart for the
heterogeneous Helmholtz problem of the matrix formulation result stated
in~\cite[Th.~2.12]{Nataf2025}.
\begin{theorem}\label{th:convergence}
    If the user-controlled parameter \(\tau\) is taken to be sufficiently small
    such that Assumption~\ref{hyp:coarse_pb_wellposed} holds
    and \(\rho := \sigma \sqrt{k_{0}k_{1}\tau} < 1\),
    then the iterative solution \(u^{n}_{\mathrm{II}}\) defined in~\eqref{eq:oras_2lvl_fixedpoint}
    converges towards the exact solution \(u\) of the model
    problem~\eqref{eq:model_pb_weak}, in the norm given by \(c_{\Omega}\)
    defined in~\eqref{eq:norm}, and with rate \(\rho\),
    namely
    \begin{equation}
        \|u - u^{n}_{\mathrm{II}}\|_{\Omega}^{} \leq \rho^{n}
        \|u - u^{0}_{\mathrm{II}}\|_{\Omega}^{},
        \qquad n \geq 0.
    \end{equation}
\end{theorem}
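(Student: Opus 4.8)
The plan is to sharpen the one-level chain~\eqref{eq:error_est_lvl1}: by construction of $U_\tau$, the components of $e^n_{\mathrm{II}}|_{\widetilde\Omega_j}$ along eigenfunctions of~\eqref{eq:gevp} with $\lambda_j > \tau$ are mapped into the coarse space, hence are killed by $\mathrm{I}-P_0$, while the remaining components are controlled by $\tau$ rather than by $\xi_j$. The starting point is the identity $e^{n+1}_{\mathrm{II}} = (\mathrm{I}-P_0)\sum_j E_j\chi_j R_j e^n_{\mathrm{II}}|_{\widetilde\Omega_j}$ derived just above. I would first recast~\eqref{eq:gevp} in spectral form. Since $\mu$ and $\nu$ are bounded above and below by positive constants, $c_{\widetilde\Omega_j}$ and $c_{\Omega_j}$ are inner products equivalent to the $H^1$ ones, so $(H^1(\widetilde\Omega_j),c_{\widetilde\Omega_j})$ and $(H^1(\Omega_j),c_{\Omega_j})$ are Hilbert spaces and the operator $K_j$ of Proposition~\ref{th:Pj_compact} is still compact between them. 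By Riesz representation, \eqref{eq:gevp} is the eigenproblem $T_j\widetilde u_j = \lambda_j\widetilde u_j$ for $T_j := K_j^* K_j$, defined by $c_{\widetilde\Omega_j}(T_j\widetilde u_j,\widetilde u_j^t) = c_{\Omega_j}(K_j\widetilde u_j,K_j\widetilde u_j^t)$ for all $\widetilde u_j^t$; the operator $T_j$ is self-adjoint, positive semi-definite and compact, hence has real eigenvalues $\lambda_j^{(1)}\ge\lambda_j^{(2)}\ge\cdots\ge 0$ accumulating only at $0$, with $c_{\widetilde\Omega_j}$-orthonormal eigenfunctions $\{\widetilde\phi_j^{(k)}\}_{k\ge 1}$; testing~\eqref{eq:gevp} against two of them shows that the family $\{\chi_j R_j\widetilde\phi_j^{(k)}\}_k$ is $c_{\Omega_j}$-orthogonal with $\|\chi_j R_j\widetilde\phi_j^{(k)}\|_{\Omega_j}^2 = \lambda_j^{(k)}$.

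The second step is the truncation and the one-step contraction. Let $V_j^{\mathrm{hi}} := \operatorname{span}\{\widetilde\phi_j^{(k)} : \lambda_j^{(k)} > \tau\}$, which is finite-dimensional since $\lambda_j^{(k)}\to 0$, and let $\Pi_j$ be the $c_{\widetilde\Omega_j}$-orthogonal projection onto $V_j^{\mathrm{hi}}$. Writing $e^n_j := e^n_{\mathrm{II}}|_{\widetilde\Omega_j}$, the function $E_j\chi_j R_j\Pi_j e^n_j$ is a linear combination of the generators $E_j\chi_j R_j\widetilde\phi_j^{(k)}$ with $\lambda_j^{(k)}>\tau$, hence lies in $U_\tau$; since $P_0$ is a projection onto $U_\tau$, the operator $\mathrm{I}-P_0$ annihilates $\sum_j E_j\chi_j R_j\Pi_j e^n_j$, so that
\[
    e^{n+1}_{\mathrm{II}} = (\mathrm{I}-P_0)\textstyle\sum_j E_j\chi_j R_j(\mathrm{I}-\Pi_j)e^n_j.
\]
I would then reproduce the chain~\eqref{eq:error_est_lvl1} with $(\mathrm{I}-\Pi_j)e^n_j$ in place of $e^n_j$: use~\eqref{eq:ImP0norm} to extract the factor $\sigma$; use the definition of $k_0$ (the functions $\chi_j R_j(\mathrm{I}-\Pi_j)e^n_j$ being admissible there, exactly as $\chi_j R_j e^n_{\mathrm{I}}|_{\widetilde\Omega_j}$ is in~\eqref{eq:error_est_lvl1}); then, since $(V_j^{\mathrm{hi}})^\perp$ is $T_j$-invariant with $\|T_j\|_{(V_j^{\mathrm{hi}})^\perp}\le\tau$, bound $\|\chi_j R_j(\mathrm{I}-\Pi_j)e^n_j\|_{\Omega_j}^2 = c_{\widetilde\Omega_j}\big(T_j(\mathrm{I}-\Pi_j)e^n_j,(\mathrm{I}-\Pi_j)e^n_j\big) \le \tau\|(\mathrm{I}-\Pi_j)e^n_j\|_{\widetilde\Omega_j}^2 \le \tau\|e^n_j\|_{\widetilde\Omega_j}^2$; and conclude by summing over $j$ with the definition of $k_1$. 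This yields $\|e^{n+1}_{\mathrm{II}}\|_\Omega^2 \le \sigma^2 k_0 k_1\tau\,\|e^n_{\mathrm{II}}\|_\Omega^2 = \rho^2\,\|e^n_{\mathrm{II}}\|_\Omega^2$, and the stated geometric convergence follows by induction on $n$, using $\rho<1$.

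I expect the main obstacle to be the rigorous spectral set-up of~\eqref{eq:gevp} in the first step. One must observe that, although $R_j$ is not self-adjoint, the generalized eigenproblem is genuinely Hermitian: what enters is the positive semi-definite Hermitian form $(\widetilde u_j,\widetilde u_j^t)\mapsto c_{\Omega_j}(\chi_j R_j\widetilde u_j,\chi_j R_j\widetilde u_j^t) = c_{\widetilde\Omega_j}(K_j^* K_j\widetilde u_j,\widetilde u_j^t)$ associated with the self-adjoint, compact, nonnegative operator $K_j^* K_j$. One also needs $c_{\widetilde\Omega_j}$ to be a complete inner product on $H^1(\widetilde\Omega_j)$ (this is where the two-sided bounds on $\mu,\nu$ enter), so that the spectral theorem for compact self-adjoint operators applies and supplies both the orthonormal eigenbasis and the invariance and norm bound on $(V_j^{\mathrm{hi}})^\perp$ used above. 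Once these points are secured, everything downstream is a mechanical repetition of~\eqref{eq:error_est_lvl1} with the local constant $\xi_j$ replaced by the user-chosen threshold $\tau$.
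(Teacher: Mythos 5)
Your proposal is correct and follows essentially the same route as the paper: the same $c_{\widetilde{\Omega}_j}$-orthogonal projection $\Pi_j$ onto the high-eigenvalue span, the same annihilation of $\sum_j E_j\chi_j R_j\Pi_j$ by $\mathrm{I}-P_0$, and the same chain of estimates yielding $\sigma^2 k_0 k_1 \tau$. The only difference is that you spell out the spectral-theorem justification of the bound $\|\chi_j R_j(\mathrm{I}-\Pi_j)\widetilde{u}_j\|_{\Omega_j}^2\le\tau\|\widetilde{u}_j\|_{\widetilde{\Omega}_j}^2$, which the paper delegates to a citation.
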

\begin{proof}
    Let \(\Pi_{j}\) be the \(c_{\widetilde{\Omega}_{j}}\)-orthogonal projection on
    the finite dimensional space
    \begin{equation}
        \operatorname{span}\{
                \widetilde{u}_{j} \ :
                \ (\widetilde{u}_{j},\lambda_{j})\ 
                \text{solution of~\eqref{eq:gevp} such that}\ \lambda_{j} > \tau
        \}.
    \end{equation}
    Then, by construction of the coarse space,
    the image of
    \(
        \textstyle\sum_{j} E_{j}\chi_{j}R_{j}\Pi_{j}
    \)
    is in the kernel of \(\mathrm{I}-P_{0}\),
    hence
    \begin{equation}
        e^{n+1}_{\mathrm{II}}
        = (\mathrm{I}-P_{0})\textstyle\sum_{j} E_{j}\chi_{j}R_{j}[\Pi_{j} + (\mathrm{I} - \Pi_{j})]e^{n}_{\mathrm{II}}|_{\widetilde{\Omega}_{j}}
        = (\mathrm{I}-P_{0})\textstyle\sum_{j} E_{j}\chi_{j}R_{j}(\mathrm{I} - \Pi_{j})e^{n}_{\mathrm{II}}|_{\widetilde{\Omega}_{j}},
    \end{equation}
    and moreover (see e.g.~\cite[Lem.~7.7]{Dolean2015})
    \begin{equation}
        \|\chi_{j}R_{j} (\mathrm{I} - \Pi_{j})e^{n}_{\mathrm{II}}|_{\widetilde{\Omega}_{j}}\|_{\Omega_{j}}^{2}
        \leq \tau
        \|(\mathrm{I} - \Pi_{j})e^{n}_{\mathrm{II}}|_{\widetilde{\Omega}_{j}}\|_{\widetilde{\Omega}_{j}}^{2}
        \leq \tau
        \|e^{n}_{\mathrm{II}}|_{\widetilde{\Omega}_{j}}\|_{\widetilde{\Omega}_{j}}^{2}.
    \end{equation}
    Recalling~\eqref{eq:k01} and~\eqref{eq:ImP0norm},
    the error can hence be estimated as follows
    \begin{align}\label{eq:error_est_lvl2}
        \|e^{n+1}_{\mathrm{II}}\|_{\Omega}^{2}
        & = \big\|(\mathrm{I}-P_{0})\textstyle\sum_{j} E_{j}\chi_{j}R_{j}(\mathrm{I} - \Pi_{j})e^{n}_{\mathrm{II}}|_{\widetilde{\Omega}_{j}}\big\|_{\Omega}^{2} \nonumber\\
        & \leq \sigma^{2} \;
        \big\|\textstyle\sum_{j} E_{j}\chi_{j}R_{j}(\mathrm{I} - \Pi_{j})e^{n}_{\mathrm{II}}|_{\widetilde{\Omega}_{j}}\big\|_{\Omega}^{2}
        \leq \sigma^{2} k_{0} \;\
        \textstyle\sum_{j} \big\|\chi_{j}R_{j} (\mathrm{I} - \Pi_{j})e^{n}_{\mathrm{II}}|_{\widetilde{\Omega}_{j}}\big\|_{\Omega_{j}}^{2}
        \nonumber\\
        & \leq \sigma^{2} k_{0} \tau \;
        \textstyle\sum_{j} \big\|e^{n}_{\mathrm{II}}|_{\widetilde{\Omega}_{j}}\big\|_{\widetilde{\Omega}_{j}}^{2}
        \leq \sigma^{2} k_{0} k_{1} \tau \;
        \|e^{n}_{\mathrm{II}}\|_{\Omega}^{2}.
    \end{align}
\end{proof}

In practice, Krylov acceleration is used, and one iteration of the above
algorithm~\eqref{eq:oras_2lvl_fixedpoint} is used as a preconditioner.
Theorem~\ref{th:convergence} also implies convergence of the Krylov iterative
method with this preconditioner.

\section{Numerical illustrations}

We conclude with some numerical experiments on the Marmousi model,
which represents a realistic geological structure with varying velocity \(c\)
(see Figure~\ref{fig:marmousi_plots})
and fits within the model problem~\eqref{eq:model_pb_strong} with \(\nu=c^{-2}\),
\(\mu=1\).
A homogeneous Dirichlet boundary condition is applied on the surface
and Robin boundary conditions are imposed on the remaining boundaries.
The source term \(f\) is a regularized Dirac mass (sharp Gaussian function)
centered near the surface (and \(g=0\)).

The implementation is performed in FreeFEM using the FFDDM framework.
The discretization consists of \(\mathbb{P}_{2}\) Lagrange finite elements.
We perform a weak scaling test using a frequency ramp, so that the number of
degrees of freedom in each subdomain remains constant around \(\sim36000\) (\(\sim780\)
on the subdomain boundary).
We compare results for the one-level ORAS preconditioner and the two-level
preconditioners for three coarse space sizes.
In practice, instead of using the parameter \(\tau\) to filter the eigenvalues,
we compute a fixed number of eigenpairs per subdomain given as a percentage of
the number of degrees of freedom on the boundary of the subdomain (which is an
upper bound on the meaningful eigenvectors to include on the coarse space).
This strategy avoids computing too many unnecessary eigenpairs and allows more
sensible run time comparisons.
The numerical results are reported in Table~\ref{tab:marmousi} and in
Figure~\ref{fig:marmousi_scalability}.

The results show the poor scalability of the one-level preconditioner for
increasing \(\omega\).
For a large enough coarse space, the second-level preconditioner converges 
in a much reduced number of iterations, albeit still growing with respect to
\(\omega\).
This is however achieved at the expense of a larger setup time (including
factorization of local matrices, computation of eigenvalues, assembly and
factorization of the coarse problem).
At some point, the reduction in iterations obtained by increasing further the
coarse space is offset by the larger computational effort for building the
coarse space and solving the coarse problem.

Additional numerical results providing a numerical comparison with alternative
methods (including the coarse spaces in~\cite{Hu2025,Ma2025b}) are given
in~\cite{Dolean2025}.

\begin{table}
	\scriptsize
    \centering
    \begin{tabular}{ ccc|ccc|cccc|cccc|cccc }
    \multicolumn{3}{c|}{} & \multicolumn{3}{c|}{one-level} & \multicolumn{4}{c|}{two-level $(7.5\%)$} & \multicolumn{4}{c|}{two-level $(10\%)$} & \multicolumn{4}{c}{two-level $(12.5\%)$} \\
     $\omega$ & $J$ & $N$ & It & T$_{\mathrm{setup}}$ & T$_{\mathrm{solver}}$ & CS & It & T$_{\mathrm{setup}}$ & T$_{\mathrm{solver}}$ & CS & It & T$_{\mathrm{setup}}$ & T$_{\mathrm{solver}}$ & CS & It & T$_{\mathrm{setup}}$ & T$_{\mathrm{solver}}$\\
    \hline
     30$\pi$ & 32 & 1106441 & 224 & 1.6 & 37  & 1884 & 25 & 55 & 5.4  & 2512 & 16 & 79 & 4.1  & 3140 & 13 & 105 & 3.9 \\
     42$\pi$ & 64 & 2167257 & 314 & 1.7 & 62  & 3732 & 46 & 55 & 9.3  & 4976 & 20 & 80 & 5.0  & 6220 & 15 & 107 & 4.2 \\
     60$\pi$ & 128 & 4420881 & 478 & 1.5 & 125  & 7524 & 94 & 58 & 20  & 10032 & 27 & 84 & 6.7  & 12540 & 20 & 113 & 5.7 \\
     84$\pi$ & 256 & 8662193 & 645 & 1.7 & 267  & 14865 & 202 & 67 & 70  & 19820 & 42 & 96 & 17  & 24775 & 26 & 134 & 14 \\
     120$\pi$ & 512 & 17673761 & 915 & 2.2 & 1120  & 30297 & 389 & 74 & 178  & 40396 & 62 & 109 & 28  & 50495 & 37 & 150 & 21 
    \end{tabular}
    \caption{
        Angular frequency \(\omega\), number of subdomains \(J\), number of degrees of
        freedom \(N\), number of GMRES iterations to reach a relative residual
        of \(10^{-8}\) (It), setup time (T\(_{\mathrm{setup}}\)) and solver
        time (T\(_{\mathrm{solver}}\)) for the
        one-level preconditioner and two-level
        preconditioners for three coarse space sizes (CS) given
        as a percentage of the number of degrees of freedom on the skeleton
        (X\%).
    }\label{tab:marmousi}
\end{table}

\begin{figure}
    \centering
    {\includegraphics[width=0.85\textwidth]{./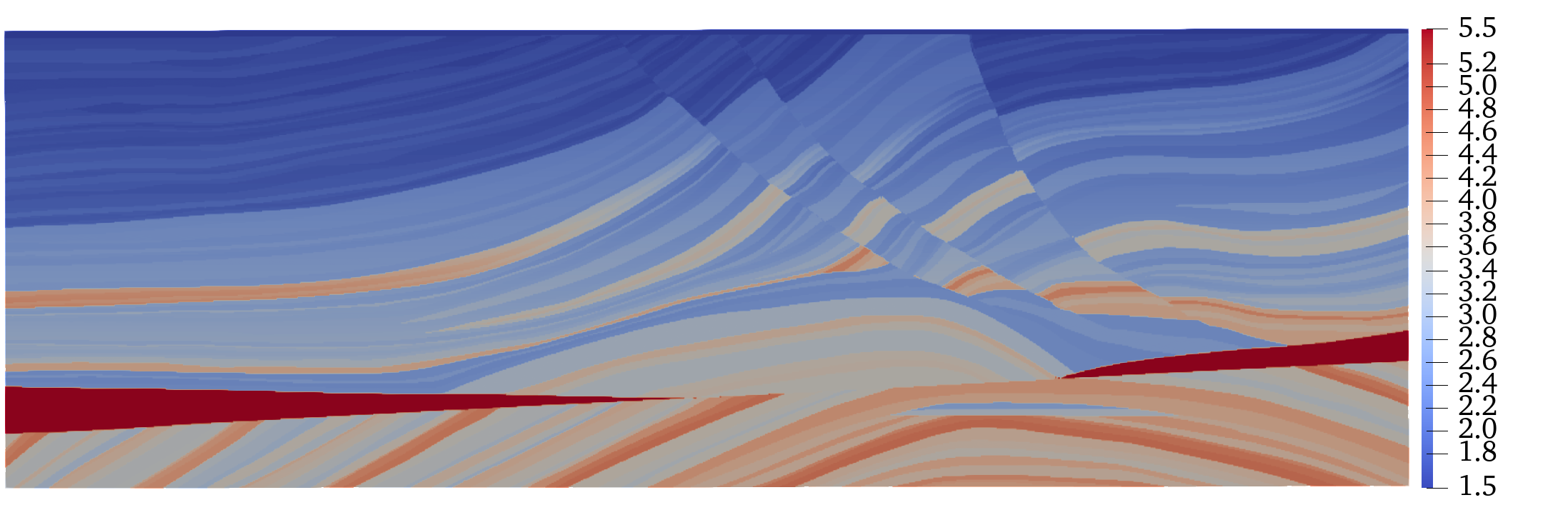}}
    {\includegraphics[width=0.85\textwidth]{./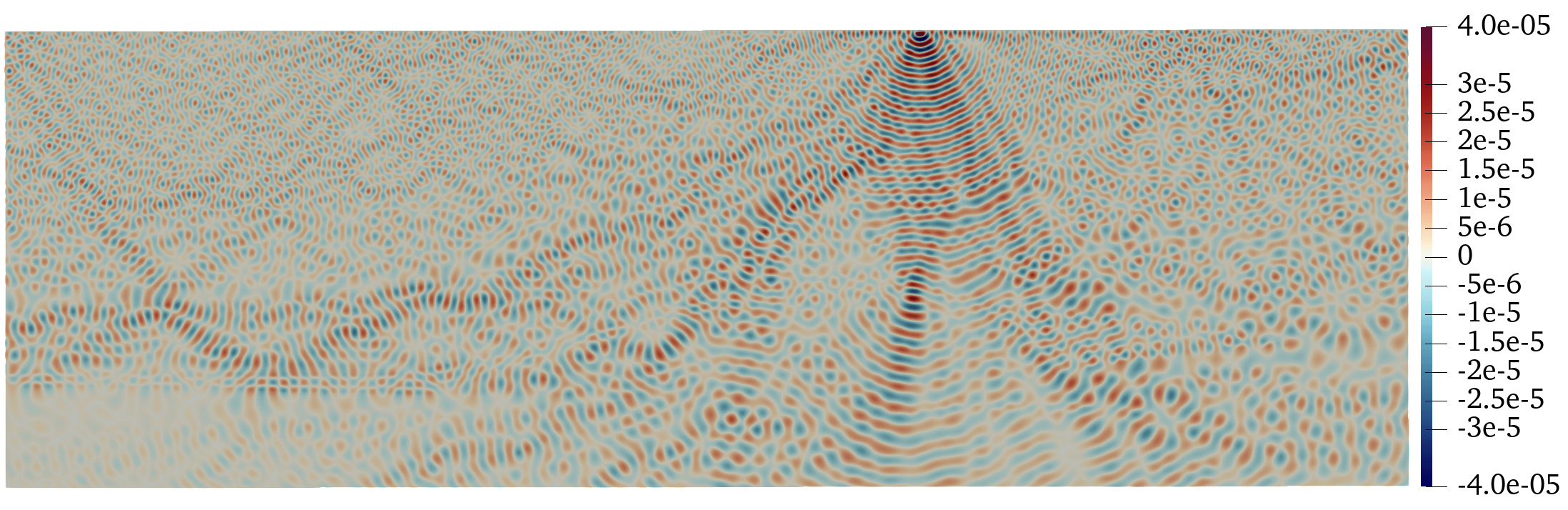}}
    \caption{
        Velocity profile (top) and real part of the solution \(u\) for
        \(\omega=60\pi\) (bottom).
    }\label{fig:marmousi_plots}
\end{figure}

\begin{figure}
    \centering
    {\includegraphics[height=0.16\textheight]{./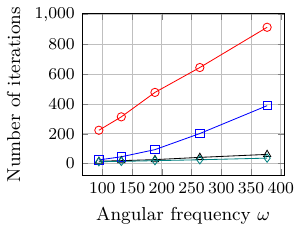}}
    {\includegraphics[height=0.16\textheight]{./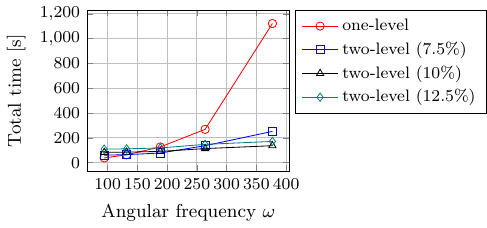}}
    \caption{
        GMRES iteration count (left) and total run time (right) with respect to \(\omega\).
    }\label{fig:marmousi_scalability}
\end{figure}

\bibliographystyle{spmpsci}
\bibliography{biblio.bib}

\end{document}